\theoremstyle{plain}
\newtheorem{theorem}{Theorem}
\newtheorem{corollary}{Corollary}
\theoremstyle{definition}
\theoremstyle{remark}
\numberwithin{equation}{section} % to get equations numbered
\begin{document}
\title[Betti and Tachibana numbers]{Betti and Tachibana numbers} % please provide
                                % an abbreviated title 

\author{S.E. Stepanov}
\address{Finance University under the Government of Russian Federation\\ 
 Dept.~of Mathe\-ma\-tics\\ Leningradsky Prospect, 49-55\\ 125993 \\ Moscow\\ Russia}
%Some University\\ Some Department\\ 12 Curly St.\\ 12345   City\\ Country}

\email{s.s.stepanova@mail.ru}

\thanks{The paper was supported by grant P201/11/0356 of The Czech Science Foundation.}

\author{J. Mike\v s}

\address{Palacky University\\Faculty of Science\\ Dept.~of Algebra and Geometry\\
17.~listopadu 12\\ 77146 \\ Olomouc\\ Czech Republic}

%\curraddr{KAG PrF UP\\17. listopadu 12\\ 77146 \\ Olomouc\\ Czech Republic}

\email{josef.mikes@upol.cz}

\begin{abstract}
We present a rough classification of differential forms on a Riemannian manifold, we consider definitions and properties of conformal Killing forms on a compact Riemannian manifold and define Tachibana numbers as an analog of the well known Betti numbers. We state the conditions that characterize these numbers. In the last section we show connections between the Betti and Tachibana numbers.
\end{abstract}

%\dedicatory{This paper is dedicated to Professor X on his 125th birthday.}

\subjclass{53C20, 53C21, 53C24}

\keywords{Riemannian manifold, differential form, Betti numbers, Tachibana numbers}

\maketitle

\def\a{\alpha}
\def\b{\beta}
\def\la{\lambda}
\def\om{\omega}
\def\op{\omega^\prime}
\def\ds{\displaystyle}
\def\noi{\noindent}
\def\R{{\mathbb R}}
\def\RR{{\mathfrak R}}
\def\tm{\hbox{${\bold T}^r(M,{\mathbb R})$}}
\def\dm{\hbox{${\bold D}^r(M,{\mathbb R})$}}
\def\fm{\hbox{${\bold F}^r(M,{\mathbb R})$}}
\def\hm{\hbox{${\bold H}^r(M,{\mathbb R})$}}
\def\km{\hbox{${\bold K}^r(M,{\mathbb R})$}}
\def\cm{\hbox{${\bold C}^r(M,{\mathbb R})$}}
\def\omm{\hbox{${\Omega}^r(M)$}}
\def\pmm{\hbox{${\bold P}^r(M,{\mathbb R})$}}

\section{Introduction}
The the paper is based on our plenary lecture at the International Conference AGMP-8 (Brno, 12--14 September 2012). The  paper is organized as follows. In the first section, we introduce some notations and give some basic definitions of the theory of differential forms on a Riemannian manifold $(M, g)$ and natural operators on forms. In addition, we present a rough classification of differential forms on a Riemannian manifold. In the next two paragraphs of the paper we consider properties of harmonic and conformal Killing forms on an $n$-dimensional compact Riemannian manifold $(M, g)$ and determine the Tachibana number $t_r(M)$ as an analog of the Betti number $b_r(M)$ of $(M, g)$ for 
$1\leq r \leq n - 1$. Moreover, we state some the conditions that characterize these numbers. In the last section we formulate results on relationships between Betti and Tachibana numbers.

%2.
\section{On a classification of differential forms on Riemannian manifold}

In this section we will show a rough classification of differential forms on Riemannian manifold which will be useful for the establishment of desired results. Moreover, we want to fix some notations and basic concepts. 

Let $ (M, g)$ be an $n$-dimensional compact and orientable Riemannian manifold with the Live-Civita connection $\nabla$  and let $\Omega^r(M)$  denote the vector space of smooth $r$-forms on $M$  for  $1\leq r\leq n-1$. We define an inner product on $\Omega^r(M)$  by the formula:  
$\langle\om,\op\rangle=\int_M g(\om,\op)\,d\,{\rm vol}$
 for any $\om,\op\in\Omega^r(M)$. The inner product structures on $\Omega^r(M)$  allow us to define the exterior co-differential operator $d^*$: $\Omega^r(M)\to\Omega^{r- 1}(M)$ as a formal adjoint to the well known exterior differential operator $d$: $\Omega^r(M)\to\Omega^{r+1}(M)$ by the following formula   $\langle d\om,\theta\rangle=\langle\om,d^*\theta\rangle$.
                          
More than thirty years ago, Bourguignon (see\cite{1}) considered the space of Riemannian natural (with respect to isometric diffeomorphisms) first-order differential operators on $\Omega^r(M)$  with values in the space of homogeneous tensors on $M$.  He proved the existence of a basis of this space which consists of three operators $\{D_1,D_2,D_3\}$  where $D_1:= d$ and $D_2:= d^*$. As for the third operator $D_3$, Bourguignon said that $D_3$ does not have any geometric interpretation for $r > 1$. It was also pointed out that in the case $r = 1$ the kernel of $D_3$ consists of infinitesimal conformal transformations of $M$.

         By way of specification of Bourguignon's result, we showed that (see \cite{14,20})
 $$
D_1=\frac1{p+1}\ d; \quad 
D_2=\frac1{n-p+1}\ g\wedge d^*; \quad 
D_3=\nabla-\frac1{p+1}\ d - \frac1{n-p+1}\ g\wedge d^* 
$$  
and proved (see \cite{14,20}) that the kernel of the third basis operator $D_3$ consists of conformal Killing $r$-forms. These forms were introduced by Tachibana (see~\cite{21}) and Kashiwada (see\cite{7}) as a natural generalization of \textit{conformal Killing vector fields}, which are also called infinitesimal conformal transformations (see \cite{12}). The space of conformal Killing $r$-forms will be denote by \tm.

In addition, we mention that the kernel of the first basis differential operator  $D_1$ consists of closed $r$-forms and the kernel of the second basis differential operator  $D_2$ consists of co-closed $r$-forms. Two vector spaces of closed and co-closed $r$-forms will be denoted by \dm\  and  \fm.
  
The condition  $\om\in ker D_1\cap ker D_2$ characterizes the form $\om$  as a \textit{harmonic form} (see \cite[p.~204]{10}), therefore the vector space  \hm\ of harmonic \hbox{$r$-forms} is defined as 
$$\hm\  = \dm \cap   \fm .$$

The condition  $\om\in ker D_2\cap ker D_3$ characterizes the $r$-form   as a \textit{co-closed conformal Killing form} which is also called as a \textit{Killing form} (see \cite[p.~65--66]{4},  \cite{18}). Therefore the vector space \km\ of co-closed conformal Killing $r$-forms is defined as 
$$
\km\  = \tm\cap    \fm  . 
$$
Co-closed conformal Killing forms can be considered as a generalization of Killing vector fields that define infinitesimal isometric transformations. 

The condition $\om\in ker D_1\cap ker D_3$  characterizes $\om$  as a \textit{closed conformal Killing form}. Therefore the space \fm\  of closed conformal Killing \hbox{$r$-forms} is defined as (see \cite{20})
$$\fm\  = \dm\cap    \tm  .$$

Closed conformal Killing forms (which are also called planar forms) is a generalization of closed conformal Killing vector fields, which are also called local gradient infinitesimal conformal transformations (see \cite{12}). We denote by \cm\ the subspace of  $\Omega^r(M)$ which consists of parallel $r$-forms, which are also called covariant constant $r$-forms. It is clear that  
$$\cm\ = \km\cap    \pmm \subset  \hm .$$

Using definitions and propositions which we have formulated above we obtain the 3D-diagram of inclusions of subspaces of differential $r$-forms on an $n$-dimensional $(1\leq   r\leq   n - 1)$ Riemannian manifold $(M,g)$.

%%%obr
\begin{center}
\unitlength=0.8cm
\begin{picture}(11.5,10)
\put(3.2,0){\framebox(2.2,0.9){\omm}}
\put(0.4,1.9){\framebox(2.2,0.9){$\bold D^r(M,\mathbb{R})$}}
\put(8,2.2){\framebox(2.2,0.9){$\bold F^r(M,\mathbb{R})$}}
\put(6.1,4.1){\framebox(2.2,0.9){$\bold H^r(M,\mathbb{R})$}}

\put(3.2,0.45){\vector(-1,1){1.45}} %%% šipky dolní ètyøúhelník
\put(5.4,0.5){\vector(2,1){3.4}}
\put(9,3.1){\vector(-1,1){1}}
\put(2.6,2.6){\vector(2,1){3.48}}

\put(2.9,4.8){\framebox(2.2,0.9){$\bold T^r(M,\mathbb{R})$}}
\put(0,6.7){\framebox(2.2,0.9){$\bold P^r(M,\mathbb{R})$}}
\put(9,7){\framebox(2.2,0.9){$\bold K^r(M,\mathbb{R})$}}
\put(5.2,8.9){\framebox(2.2,0.9){$\bold C^r(M,\mathbb{R})$}}

\put(2.9,5.25){\vector(-1,1){1.45}} %%% šipky horní ètyøúhelník
\put(5.1,5.3){\vector(2,1){3.9}}
\put(9,7.5){\vector(-1,1){1.6}}
\put(1.8,7.6){\vector(2,1){3.4}}

\put(0.8,2.8){\vector(0,1){3.9}}  %%% šipky nahoru
\put(3.8,0.9){\vector(0,1){3.9}}
\put(9.5,3.1){\vector(0,1){3.9}}
\put(6.8,5.0){\vector(0,1){3.9}}
\end{picture}

\end{center}
 
 Here, for instance, the arrow \fm\ $\to$    \km\   means that the vector space  \km\   is subspace of \fm.

% 3. 
\section{Harmonic forms and Betti numbers}

One of the motivations to study Tachibana numbers of a compact Riemannian manifold $(M,g)$ is their close relation to Betti numbers. In this section we will present a brief review of the results of the theory of  harmonic forms and Betti numbers. 

The \textit{Laplacian on forms}  $\Delta$:  \omm $\to$ \omm, also called the \textit{Hodge Laplacian}, is defined in terms of the natural Riemannian operators $d$ and $d^*$ as  
$\Delta=d\,d^*+d^*\,d$ (see \cite[p. 52]{1}, \cite[p.~377]{2},  \cite[p.~316]{3},  \cite[p.~204]{10}). The operator  $\Delta$ is nonnegative elliptic second order linear differential operator and its kernel is finite dimensional on a compact Riemannian manifold $(M,g)$. 

Two following equivalent conditions define   $\om\in\omm$ as a \textit{harmonic form} $d\om=0$  and $d^*\om=0$  or  $\Delta\om=0$ on a compact Riemannian manifold $(M, g)$ (see \cite[p.~204]{10}).

If we denote by \hm\  the vector space of harmonic $r$-forms on a compact Riemannian manifold $(M, g)$ then by Hodge theory (see \cite{11}) the $r$-th \textit{Betti number} $b_r(M)$ of $(M, g)$ is defined by the equation $b_r(M)\! =\! \dim \hm\!<\!\infty$.
 
The Hodge Laplacian commutes  $*\,\Delta=\Delta\,*$       with the well known (see \cite[p.~33]{1},  \cite[p.~203]{10}) Hodge star operator  ${*}\!\: \omm \to\Omega^{n-r}(M)$   that implies the following isomorphism   ${*}\!\: \hm \to {\bold H}^{n-r}(M,{\mathbb R})$. From the isomorphism we obtain (see \cite[p.~389]{10}) the \textit{Poincar\'e duality theorem} $b_r(M) = b_{n - r} (M)$. 

% 4. 
\section{Conformal Killing forms and Tachibana numbers}
Before explaining main results of our paper we want to point out that they are extensions of our results in \cite{14,15,16,18,17,19,20} for conformal Killing forms and Tachibana numbers. In this section we will present a brief review of these results.

Throughout in this section we let $(M, g)$ be an $n$-dimensional compact and oriented Riemannian manifold. For any linear differential operator $D$ the inner product structures on  \omm\ allow us to define the formal adjoint operator $D^*$ to $D$ (see \cite[p.~460]{1}). In the case of the first order natural Riemannian (with respect to isometric diffeomorphisms) operator $D$ we can define a \textit{strong Laplacian} $D^*\circ  D$ (see \cite[p.~377]{2}, \cite[p.~316-317]{3}). The first simple property of these operators comes from the fact that any strong Laplacian is a non-negative elliptic second order linear differential operator (see \cite[p.~314-315]{3}) with a finite dimensional kernel (see \cite[pp. 461-463]{1}). For the other properties of these operators, see \cite{2,3,10,25}. 

We showed in \cite{16} that the formal adjoint operator  $D^*_3$ to  $D_3$ determined by the formula  
$D^*_3=\nabla^*-\frac1{p+1}\ d^*-\frac1{n-p+1}\ d\circ trace$
and the Tachibana Laplacian  $\square= D^*_3\circ D_3$   have the form 
\begin{equation}\label{(4.1)}
\square=D^*_3\circ D_3=\frac1{r(r+1)}\
\left(\bar\Delta-\frac1{r+1}\ d^*\circ d-\frac1{n-r+1}\ d\circ d^*
\right).
\end{equation}
The symbol $\bar\Delta$  is called the \textit{Bochner rough Laplacian} and   $\bar\Delta:=\nabla^*\circ\nabla$   where we denote the formal adjoit of  $\nabla$  by  $\nabla^*$ (see \cite[p. 52]{1},  \cite[p.~377]{2}). 

We proved the following three propositions (see \cite{16}):
\begin{align} 
\om\in\tm &\Leftrightarrow \om\in Ker(D^*D); \label{(4.2)}\\
\om\in\km &\Leftrightarrow \om\in Ker(D^*D)
\cap Ker\ d^*; \label{(4.3)}\\
\om\in\pmm &\Leftrightarrow \om\in Ker(D^*D)\cap Ker\ d. \label{(4.4)}
\end{align}
It is known that the kernel of  the Tachibana Laplacian $\square=D^*_3\circ D_3$  has a finite dimension and therefore we concluded that (see \cite{15}) 
 $$
 \begin{array}{l}
 \tm=\dim_{\R}(Ker D^*_3D_3)=t_r(M)<\infty;\\
 \km=k_r(M)<\infty;\\
 \pmm=p_r(M)<\infty
 \end{array}
 $$
on a compact Riemannian manifold $(M, g)$. The numbers $t_r(M)$, $k_r(M)$ and $p_r(M)$  we have called the \textit{Tachibana number}, the \textit{Killing number} and the \textit{planarity number} of a compact Riemannian manifold $(M, g)$, respectively (see~\cite{17}). These numbers satisfy the following duality properties $t_r (M)  =  t_{n - r} (M)$  and  $p_r (M)  =  k_{n - r} (M)$ for all $r = 1, \dots, n - 1$. These equalities are analogues of the Poincare duality for Betti numbers and corollaries of the following isomorphisms (see \cite{8,14,20})  
$$*\,{:}\  \tm\to {\bold T}^{n-r}(M,\R)        \hbox{ and }    {*}\,{:}\  \pmm\to {\bold K}^{n-r}(M,\R).$$        
Moreover, Tachibana numbers $t_r(M)$ are conformal scalar invariants, the Killing number $k_r(M)$ and the planarity number $p_r(M)$ are projective scalar invariants of a Riemannian manifold $(M, g)$ for all $r = 1,\dots, n - 1$. In addition, we mention that the first proposition is a corollary of conformal invariance of conformal Killing $r$-forms (see \cite{5}). The second proposition of our theorem is a corollary of projective invariance of closed and co-closed conformal Killing $r$-forms (see \cite{18}).

Let $(M, g)$ be an $n$-dimensional connected Riemannian manifold then the Tachibana number $t_r(M)$, the Killing number $k_r(M)$ and the planarity number $p_r(M)$ of $(M, g)$ satisfy the following inequalities 
$$
0\leq t_r(M)=\frac{(n+2)!}{(r+1)!\,(n-r+1)!};\qquad
0\leq k_r(M)=\frac{(n+1)!}{(r+1)!\,(n-r)!}
$$
$$
0\leq p_r(M)=\frac{(n+1)!}{r!\,(n-r+1)!};
$$ 
for all $r = 1,\dots, n -  1$. Moreover, any of two numbers $p_r(M)$ and $k_r(M)$ is maximal if and only if $(M, g)$ is Riemannian manifold with positive constant curvature (see \cite{9,18,20}). In addition, the Tachibana number $t_r(M)$ is maximal if and only if $(M, g)$ is conformal flat Riemannian manifold (see \cite{7,13}). 

%5. 
\section{Tachibana and Betti numbers}
In this section we will show some relationships between Tachibana and Betti numbers. 

 First, we formulate \textit{vanishing theorems} of Betti and Tachibana numbers. Let $(M, g)$ be an $n$-dimensional compact oriented Riemannian manifold and  $\RR\,{:}\  \Omega^2(M)\to\Omega^2(M)$ be the standard symmetric \textit{Riemannian curvature operator} of $(M, g)$ (see \cite[p.~35--36]{10}). 
 
 Supposing  $\RR\geq0$, we have $b_r(M)\leq\frac{n!}{r!\,(n-r)!}=b_r(T^n)$  where $T_n$ is a flat Riemannian $n$-torus and $r = 1,\dots, n-1$ (see \cite[p.~212]{10}). Moreover, an $n$-dimensional compact Riemannian manifold with the positive curvature operator $\RR$  is a spherical space form (see \cite{6}) and its Betti numbers $b_1(M )$, $\dots$, $b_{n - 1}(M)$ are zeros (see \cite[p.~212]{10}). In addition, we proved (see \cite{15}) that in this case an arbitrary conformal Killing $r$-form   $\om$ is uniquely decomposed in the form  $\om'+\om''$   where  $\om'$   is a Killing and  $\om''$ is a closed conformal Killing $r$-forms on $(M, g)$ for all 
 $r = 1, \dots , n -  1$ and hence $t_r (M) = k_r (M) + p_r (M)$.
 
On the other hand, if $\RR$  is non-positive then $t_r(M)\leq\frac{n!}{r!\,(n-r)!}=t_r(T^n)$  and if $\RR<0$  somewhere, then  $t_r(M)=0$ for $r = 1,\dots, n - 1$ (see \cite{15}). 

Second, we consider Betti and Tachibana numbers of a compact conformally flat Riemannian manifold and prove the following theorem as a corollary of above results.
% 
%Theorem 1. 
\begin{theorem}
Let $(M, g)$ be an $n$-dimensional $(n\geq3)$ compact conformally flat Riemannian manifold with the positive or negative definite Ricci tensor Ric then 
$t_r(M)\cdot   b_h(M) = 0$ for all  $h,r=1,\dots,n-1$.
\end{theorem}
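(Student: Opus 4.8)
The plan is to run the Bochner technique with the curvature term written out explicitly for a conformally flat metric, treating the two signs of the Ricci tensor separately: when $\mathrm{Ric}>0$ I will annihilate every Betti number, and when $\mathrm{Ric}<0$ every Tachibana number, so that each product $t_r(M)\,b_h(M)$ is forced to vanish. First I would fix a point $x\in M$ and an orthonormal frame diagonalising $\mathrm{Ric}$ there, with eigenvalues $\rho_1,\dots,\rho_n$ and $R=\sum_i\rho_i$. Since the Weyl tensor vanishes, the curvature operator $\RR$ is diagonal in the induced basis $\{e_i\wedge e_j\}$, its eigenvalues being the sectional curvatures $K_{ij}=\frac{\rho_i+\rho_j}{n-2}-\frac{R}{(n-1)(n-2)}$. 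Feeding this into the Weitzenb\"ock curvature $\mathcal W_r$ defined by $\Delta=\bar\Delta+\mathcal W_r$, and using that its eigenvalue on a decomposable basis form $e_I$ (with $I$ an $r$-element index set) equals $\sum_{i\in I,\,j\notin I}K_{ij}$, I arrive at
\[
w_r(I)=\frac{n-2r}{\,n-2\,}\,\sigma_I+\frac{r(r-1)}{(n-1)(n-2)}\,R,\qquad \sigma_I:=\sum_{i\in I}\rho_i .
\]

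For $1\le r\le n/2$ the coefficient $n-2r$ is nonnegative, so the sign of $w_r(I)$ is that of $\mathrm{Ric}$: if $\mathrm{Ric}>0$ then $\sigma_I>0$ and $R>0$ force $w_r(I)>0$, while if $\mathrm{Ric}<0$ they force $w_r(I)<0$ (for $r=1$ this is merely $w_1=\rho_{i_1}$, for even $n$ and $r=n/2$ merely the scalar term). Hence $\mathcal W_r$ is pointwise definite throughout $1\le r\le n/2$. I would not attack the complementary range $r>n/2$ directly---there the two summands compete---but reach it through the Hodge-$*$ isomorphisms recalled above, which give $b_r=b_{n-r}$ and $t_r=t_{n-r}$.

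Assume first $\mathrm{Ric}>0$. For a harmonic $r$-form the Bochner identity $0=\|\nabla\om\|^2+\int_M\langle\mathcal W_r\om,\om\rangle$, combined with $\mathcal W_r>0$, forces $\om=0$; thus $b_r(M)=0$ for $r\le n/2$, and Poincar\'e duality propagates this to $b_h(M)=0$ for every $h=1,\dots,n-1$. A fortiori $t_r(M)\,b_h(M)=0$.

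Assume next $\mathrm{Ric}<0$. Here I would bring in \eqref{(4.1)}: a conformal Killing form obeys $\square\om=0$, so integration gives $\|\nabla\om\|^2=\frac1{r+1}\|d\om\|^2+\frac1{n-r+1}\|d^*\om\|^2$, and comparison with the Weitzenb\"ock relation $\|\nabla\om\|^2=\|d\om\|^2+\|d^*\om\|^2-\int_M\langle\mathcal W_r\om,\om\rangle$ yields
\[
\int_M\langle\mathcal W_r\om,\om\rangle=\frac{r}{r+1}\,\|d\om\|^2+\frac{n-r}{n-r+1}\,\|d^*\om\|^2\ \ge\ 0 .
\]
Since $\mathcal W_r<0$ makes the left-hand side $\le0$, both sides vanish and $\om=0$; thus $t_r(M)=0$ for $r\le n/2$, and $t_r=t_{n-r}$ carries this to all $r$, whence again $t_r(M)\,b_h(M)=0$. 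The one genuinely delicate step is the very first one---verifying that on a conformally flat manifold $\mathcal W_r$ is \emph{exactly} diagonalised by the Ricci frame and that $w_r(I)$ retains a strict sign for \emph{every} index set $I$ (not just on average) across $1\le r\le n/2$; granted that, the two vanishing arguments and the $*$-duality step are routine. One should also record that definiteness of $\mathrm{Ric}$ is indispensable: on a flat torus $\mathrm{Ric}=0$ and $b_r=t_r=\binom{n}{r}\neq0$, so the conclusion genuinely fails.
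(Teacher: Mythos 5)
Your proof is correct, and it takes a genuinely different route from the paper. The paper disposes of the two cases by citation: from $\mathrm{Ric}>0$ and conformal flatness it asserts $\RR>0$ and invokes the Bochner--Yano vanishing of $b_1,\dots,b_{n-1}$; from $\mathrm{Ric}<0$ it asserts $\RR<0$ and invokes the vanishing theorem of \cite{19} for conformal Killing forms. You instead prove both vanishing statements from scratch: you diagonalise the Weitzenb\"ock curvature term $F_r$ in the Ricci eigenframe (legitimate here because the vanishing of the Weyl tensor makes the curvature operator diagonal on the $e_i\wedge e_j$, so the Gallot--Meyer eigenvalue formula $\sum_{i\in I,\,j\notin I}K_{ij}$ applies), obtain the explicit eigenvalues $w_r(I)=\frac{n-2r}{n-2}\sigma_I+\frac{r(r-1)}{(n-1)(n-2)}R$, and run the two integral identities --- the classical Bochner identity for harmonic forms and the identity $\int_M\langle F_r\om,\om\rangle=\frac{r}{r+1}\|d\om\|^2+\frac{n-r}{n-r+1}\|d^*\om\|^2$ derived from \eqref{(4.1)} for conformal Killing forms --- on the range $1\le r\le n/2$, finishing with the dualities $b_r=b_{n-r}$ and $t_r=t_{n-r}$. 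This buys something real: the paper's intermediate claim that definite Ricci forces a definite curvature operator on a conformally flat manifold is not actually justified (the eigenvalues $K_{ij}=\frac{(n-1)(\rho_i+\rho_j)-R}{(n-1)(n-2)}$ can change sign even when all $\rho_i>0$, e.g.\ eigenvalues $(\epsilon,\epsilon,1,1)$ for small $\epsilon$ in dimension $4$), whereas your sign analysis of $w_r(I)$, which only needs $n-2r\ge0$ plus the duality step, is exactly the argument that makes the cited Bochner--Yano theorem work. The one point you rightly flag --- that $F_r$ is exactly diagonalised in the Ricci frame with eigenvalues $\sum_{i\in I,\,j\notin I}K_{ij}$ --- is a standard fact for curvature operators diagonal on decomposables, so there is no gap; your closing remark that the flat torus shows strict definiteness of $\mathrm{Ric}$ is indispensable is a useful sanity check the paper does not make.
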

\begin{proof} 
Let one the two following conditions be satisfied $Ric > 0$ or $Ric < 0$ at every point of $(M, g)$. If we suppose that $(M, g)$ is conformally flat Riemannian manifold of dimension $n\geq3$, then in the first case we have  $\RR > 0$ and Betti numbers  $b_1(M)=\cdots=b_{n-1}(M)=0$ because there are no non-zero harmonic $r$-forms for all  $r=1,\dots,n-1$ (see \cite[pp.~78-79]{4}). On the other hand, in the second case we have  $\RR < 0$ and Tachibana numbers  $t_1(M)=\cdots=t_{n-1}(M)=0$  because there are no non-zero conformal Killing $r$-forms for all $r=1,\dots,n-1$  (see \cite{19}). Therefore we conclude that 
$t_r(M)\cdot b_h(M) = 0$ for all  $h,r=1,\dots,n-1$, which finishes the proof of the theorem.
\end{proof}
Next, we consider the case of an even-dimensional compact and oriented conformally flat Riemannian manifold. In this special case the following theorem is true.
%
% Theorem 2. 
\begin{theorem} 
Let $(M, g)$ be a 2$r$-dimensional compact and oriented conformally flat Riemannian manifold. Then the following propositions are true. 
\begin{enumerate}
\item	If the scalar curvature $s\neq0$ and the Tachibana number $t_r (M)\neq 0$ then the Betti number $b_r (M) = 0$. Moreover, if s is a positive constant, then $t_r (M) = k_r (M) + p_r (M)$ for 
$k_r(M)=\dim_{\R}(Ker D^*_3D_3\cap Im\, d^*)$\\
  and 
$p_r(M)=\dim_{\R}(Ker D^*_3D_3\cap Im\, d)$.
\item	If the scalar curvature $s\neq   0$ and the Betti number $b_r (M)\neq   0$ then the Tachibana number $t_r (M) = 0$.
\item	If the scalar curvature $s = 0$ then the Betti number $b_r (M)\neq   0$ if and only if the Tachibana number $t_r (M) \neq 0$ and moreover  \\
$t_r (M)=b_r (M)=(2r)!\,(r!)^{-2}$.
\end{enumerate} 
\end{theorem}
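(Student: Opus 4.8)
The plan is to reduce all three parts to a single operator identity that is special to the middle dimension of a conformally flat manifold, and then to run Bochner-type integral arguments governed by the sign of the scalar curvature $s$. First I would specialize the basic identity \eqref{(4.1)}. When $n=2r$ one has $n-r+1=r+1$, so the two first-order correction terms in \eqref{(4.1)} combine into the Hodge Laplacian and one obtains the clean relation
$$
r(r+1)\,\square=\bar\Delta-\frac1{r+1}\,\Delta .
$$
Next I would feed in the Weitzenb\"ock decomposition $\Delta=\bar\Delta+\mathcal W_r$, where $\mathcal W_r$ is the Bochner curvature operator on $r$-forms. The key computation is that on a conformally flat manifold the part of $\mathcal W_r$ built from the trace-free Ricci tensor carries the factor $(n-2r)$, hence vanishes identically in the middle dimension, leaving only the scalar part; calibrating that scalar part on constant-curvature space forms (which are conformally flat, with $\mathcal W_r=\kappa\,r(n-r)\,\mathrm{Id}$) fixes the constant, giving $\mathcal W_r=\frac{r}{2(2r-1)}\,s\cdot\mathrm{Id}$. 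Substituting $\bar\Delta=\Delta-\mathcal W_r$ then yields the pointwise identity
$$
\Delta=(r+1)^2\,\square+\frac{r+1}{2(2r-1)}\,s\cdot\mathrm{Id},
$$
which drives everything that follows.

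For (1) and (2) I would use that on a connected manifold the hypothesis $s\neq0$ means $s$ is nowhere zero, hence of one fixed sign. Pairing the identity with $\omega$ and integrating: if $\omega$ is harmonic ($\Delta\omega=0$) then $(r+1)^2\|D_3\omega\|^2=-\frac{r+1}{2(2r-1)}\int_M s\,|\omega|^2\,d\,\mathrm{vol}$, while if $\omega$ is conformal Killing ($\square\omega=0$) then $\|d\omega\|^2+\|d^*\omega\|^2=\frac{r+1}{2(2r-1)}\int_M s\,|\omega|^2\,d\,\mathrm{vol}$. When $s>0$ the first identity forces every harmonic $r$-form to vanish, so $b_r(M)=0$; when $s<0$ the second forces every conformal Killing $r$-form to vanish, so $t_r(M)=0$. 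These two statements are precisely the contrapositive pair (1) and (2). For the ``moreover'', when $s$ is a positive constant we already have $b_r(M)=0$, so Hodge theory gives $\Omega^r(M)=\operatorname{Im} d\oplus\operatorname{Im} d^*$; and with $s$ constant the displayed identity exhibits $\square$ as a constant-coefficient expression in $\Delta$, so $\square$ commutes with $d$, $d^*$ and with the two Hodge projections. Hence $\ker\square$ splits as $(\ker\square\cap\operatorname{Im} d^*)\oplus(\ker\square\cap\operatorname{Im} d)$, whose summands consist of co-closed and of closed conformal Killing forms, i.e.\ of Killing and planar forms; this gives $t_r(M)=k_r(M)+p_r(M)$ with exactly the stated descriptions of $k_r$ and $p_r$.

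For (3), setting $s=0$ collapses the identity to $\Delta=(r+1)^2\square$, so $\ker\Delta=\ker\square$, that is $\hm=\tm$; this yields at once the equivalence $b_r(M)\neq0\Leftrightarrow t_r(M)\neq0$ together with $b_r(M)=t_r(M)$. The same identity also gives $\bar\Delta=\Delta$, so any harmonic $r$-form satisfies $\nabla^*\nabla\omega=0$ and is therefore parallel. The remaining, and I expect hardest, step is to pin down the exact value $(2r)!\,(r!)^{-2}=\dim\Lambda^rT_x^*M$. Since every middle harmonic form is now parallel, $\dim\hm$ equals the number of independent holonomy-invariant $r$-covectors, and attaining the full value forces the restricted holonomy to act trivially on $\Lambda^r$, hence (for $1\le r\le 2r-1$) to be trivial, so that $M$ is flat and, being compact, a flat torus with $b_r=t_r=(2r)!\,(r!)^{-2}$. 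The genuine difficulty lies exactly here: de Rham splitting together with the classification of conformally flat Riemannian products allows factors that are space forms of opposite constant curvatures $\pm\kappa$ summing to $s=0$, and such a product (e.g.\ a spherical factor times a compact hyperbolic factor of matching curvature) is compact, conformally flat, scalar-flat and carries a non-trivial parallel middle form without being flat. Ruling out this reducible, nontrivial-holonomy case, so that the holonomy is forced to be trivial, is the crux of the argument and the step that requires structural input beyond the operator identity above; everything preceding it is a formal consequence of that identity.
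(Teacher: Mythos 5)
Your treatment of parts (1), (2) and the first half of (3) is essentially the paper's own argument: the middle-dimension collapse of the identity (4.1) (using $n-r+1=r+1$), the Weitzenb\"ock formula, and the conformally flat identity $F_r(\om)=c\,s\cdot\om$ give exactly the paper's equations (5.6), (5.11) and (5.12); your sign-of-$s$ integral argument is the Bochner-integrated form of the paper's pointwise eigenvalue reasoning, and your use of the harmonic-free Hodge decomposition for constant $s>0$ reproduces the paper's proof of the ``moreover'' clause. (Incidentally, your calibrated coefficient $\tfrac{r}{2(2r-1)}s$ is the one consistent with the paper's own space-form formula (5.13); the paper's (5.5) drops a factor of $r$, but this does not affect the logic.)

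The step you leave open is precisely the step the paper does not actually prove. From $s=0$ the paper deduces that every harmonic $r$-form is parallel and then asserts $b_r(M)=t_r(M)=(2r)!\,(r!)^{-2}$ ``because a parallel form is completely determined by its value at a point.'' That reasoning yields only the upper bound $b_r(M)\le\binom{2r}{r}$; equality would require the restricted holonomy to act trivially on $\Lambda^r T_x^*M$, which does not follow from the existence of one nonzero parallel form. The obstruction you suspect is in fact a genuine counterexample: take $M=\mathbb S^2(1)\times\Sigma$ with $\Sigma$ a compact hyperbolic surface of curvature $-1$. This $M$ is compact, oriented, (locally) conformally flat and scalar-flat, yet $b_2(M)=t_2(M)=2\neq 6=\binom{4}{2}$. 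So the final equality in part (3) is false as stated, and no further ``structural input'' can close your gap; what your argument (and the paper's) actually establishes in the case $s=0$ is that $\ker\Delta=\ker\square$ coincides with the space of parallel $r$-forms, hence $b_r(M)=t_r(M)\le\binom{2r}{r}$, with equality exactly when the holonomy is trivial (the flat-torus case). You were right to isolate this as the crux rather than wave it through.
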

\begin{proof} 
First of all, we recall, that the Hodge Laplacian  operator $\Delta$  admits the well known \textit{Weitzenb\"ock decomposition} (see \cite[p.~53]{1})
\begin{equation}\label{(5.1)}
\Delta\om=\bar\Delta\om+F_r(\om),
\end{equation}
where (see \cite[pp.~60--61]{4})
$$
F_r(\om)(X_1,\dots,X_r)=\sum_{\a=1}^r Ric(e_j,X_\a)\om(X_1,\dots,X_{\a-1},e_j,
X_{\a+1},\dots,X_r) -
$$ 
$$
\sum_{\a<\b}^{1,\dots, r} R(e_j,e_k,X_\a,X_\b)\om(X_1,\dots,X_{\a-1},e_j,
X_{\a+1},\dots,X_{\b-1},e_k,
X_{\b+1},\dots,X_r) 
$$
for an arbitrary \ $\om\in\Omega^r(M)$; \ $X_1,\dots,X_r\in C^\infty TM$ \  any orthonormal basis  $\{e_1,\dots,e_n\}$, and the curvature tensor $R$ and the Ricci $Ric$ of 
$(M, g)$. As a consequence of this fact, we obtain from \eqref{(4.1)} the following 
\begin{equation}\label{(5.2)}
\square\,\om=\frac{1}{r(r+1)}\ \left(\Delta\om- F_r(\om)-\frac r{r+1}\ d^*d\om-
\frac{n-r}{n-r+1}d\,d^*\om\right)
\end{equation}
for any  $\om\in\omm$. In particular, we have 
\begin{equation}\label{(5.3)}
\Delta\om=F_r(\om)+\frac r{r+1}\ d^*d\om +
\frac{n-r}{n-r+1}d\,d^*\om
\end{equation}
for an arbitrary conformal Killing $r$-form  $\om$.

Next, if we suppose that our Riemannian manifold $(M, g)$ has the dimension $2r$ and can be
reduced to an Euclidean space by suitable conformal transformation the metric tensor $g$, then we have the equality (see \cite[p.~79]{4})
\begin{equation}\label{(5.5)}
F_r(\om)=\frac 1{2(2r-1)}\ s\cdot\om ,
\end{equation}
After that, from \eqref{(5.3)} and \eqref{(5.5)} we can deduce the equation 
\begin{equation}\label{(5.6)}
\Delta\om=\frac{r+ 1}{2r(2r-1)}\ s\cdot\om ,
\end{equation}
where $s$ is the scalar curvature of $(M, g)$. In particular, if $s\neq0$  we deduce from \eqref{(5.6)} that $\Delta\om=0$  if and only if  $\om=0$. Moreover, if we suppose that $s$ is a positive constant then we can rewrite the equation \eqref{(5.6)} in the form
\begin{equation}\label{(5.7)}
\om= d^*d\om' +d\,d^*\om'
\end{equation}
where  $\om'=\frac{2r(2r-1)}{(r+ 1)s}\ \om$. The equality \eqref{(5.7)} is the well known Hodge-de Ram decomposition where there is no a harmonic form. From \eqref{(5.7)} we conclude the following orthogonal decomposition
\begin{equation}\label{(5.8)}
\tm=\pmm \oplus\km
\end{equation}
where
\begin{equation}\label{(5.9)}
\km=\{\om\in\omm\,|\,\om\in Ker D^*_3D_3\cap Im\,d^*\}
\end{equation}
and 
\begin{equation}\label{(5.10)}
\pmm=\{\om\in\omm\,|\,\om\in Ker D^*_3D_3\cap Im\,d\}
\end{equation} 
From \eqref{(5.8)} -- \eqref{(5.10)} we obtain the following equalities $t_r(M)= k_r(M)+ p_r(M)$, 
$k_r(M)=\dim_{\R}(Ker D^*_3D_3\cap Im\,d^*)$  and  $p_r(M)=\dim_{\R}(Ker D^*_3D_3\cap Im\,d)$ which finish the proof of the first assertion of the theorem. 

Next, from \eqref{(4.1)}, \eqref{(5.1)} and \eqref{(5.5)} we infer 
\begin{equation}\label{(5.11)}
\frac s{2(2r-1)}\ \om=\frac r{r+1}\ \Delta \om- r(r+1)\ \square\,\om
\end{equation} 
for any  $\om\in\omm$. 

Then for an arbitrary harmonic $r$-form  $\om$ we have 
\begin{equation}\label{(5.12)}
\frac s{2(2r-1)}\ \om=- r(r+1)\ \square\,\om.
\end{equation} 
In particular, if $s\neq0$  we deduce from \eqref{(5.6)} that $\square\,\om  = 0$ if and only if  $\om  = 0$. 

Finally, we consider a 2$r$-dimensional compact conformally flat Riemannian manifold with zero scalar curvature. In this case, from \eqref{(5.11)} we obtain   $\Delta\om=(r+1)^2\,\square\,\om$. If we suppose that $b_r (M)\neq   0$ then there exists a nonzero harmonic $r$-form $\om$  such that  $\Delta\om=(r+1)^2\,\square\,\om  =    0$. Then  $\om$ is conformal Killing and from \eqref{(5.1)} we obtain the equation  $\nabla\om=0$.  In this case we have  
$b_r (M)=t_r(M)=\frac{n!}{r!\,(n-r)!}=(2r)!\cdot(r!)^{-2}$, because a parallel form is completely determined by e value at point. This finishes the proof of the theorem.
\end{proof}

As a consequence of the previous theorem we obtain the following corollary.
\begin{corollary}
 Let $(M, g)$ be a 2$r$-dimensional compact and oriented conformally flat Riemannian manifold with constant nonzero scalar curvature s. Then any nonzero conformal Killing $r$-form is an eigenform of the Hodge Laplacian  $\Delta$ corresponding to the eigen-value $\la=(r+1)\,(2r(2r-1))^{-1}s$ for $s > 0$ and any nonzero harmonic $r$-form is an eigenform of the Tachibana Laplacian $\square$ corresponding to the eigenvalue  $\la=-((2r(r+1)(2r-1))^{-1}s$ for $s < 0$.
\end{corollary}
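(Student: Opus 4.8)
The plan is to read off both eigenvalue relations directly from the two scalar identities already established in the proof of the preceding theorem, namely \eqref{(5.6)} and \eqref{(5.12)}, which hold on any $2r$-dimensional compact oriented conformally flat manifold $(M,g)$. In effect the corollary is a restatement of those identities under the additional hypothesis that $s$ is a nonzero constant, so no new geometric input is needed.

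First I would treat the conformal Killing case. Equation \eqref{(5.6)} asserts $\Delta\om=\frac{r+1}{2r(2r-1)}\,s\cdot\om$ for an arbitrary conformal Killing $r$-form $\om$. If $\om$ is nonzero, this is by definition the statement that $\om$ is an eigenform of the Hodge Laplacian $\Delta$ with eigenvalue $\la=(r+1)\,(2r(2r-1))^{-1}s$, and the hypothesis $s>0$ makes this eigenvalue positive, exactly as recorded in the statement. Next I would treat the harmonic case by rearranging \eqref{(5.12)}, which reads $\frac{s}{2(2r-1)}\,\om=-r(r+1)\,\square\,\om$ for an arbitrary harmonic $r$-form $\om$. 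Dividing through by $-r(r+1)$ gives $\square\,\om=-(2r(r+1)(2r-1))^{-1}s\cdot\om$, so a nonzero harmonic $\om$ is an eigenform of the Tachibana Laplacian $\square$ with eigenvalue $\la=-(2r(r+1)(2r-1))^{-1}s$; the sign hypothesis $s<0$ renders this eigenvalue positive, which is moreover forced by the nonnegativity of the strong Laplacian $\square=D^*_3\circ D_3$.

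There is essentially no analytic obstacle here: both assertions are immediate consequences of the pointwise identities already derived in Theorem~2, and the proof reduces to bookkeeping. The only points requiring care are retaining the nonzero hypothesis so that the term \emph{eigenform} is meaningful, verifying the arithmetic simplification $\tfrac{1}{r(r+1)}\cdot\tfrac{s}{2(2r-1)}=\tfrac{s}{2r(r+1)(2r-1)}$ that produces the printed eigenvalue for $\square$, and checking that the stated sign conditions on $s$ are precisely those making each eigenvalue positive, in agreement with $\Delta\geq0$ and $\square\geq0$. Since \eqref{(5.6)} and \eqref{(5.12)} are valid on the whole manifold, constancy of $s$ guarantees the eigenvalue is a single real number, so the conclusion follows at once.
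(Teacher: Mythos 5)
Your proposal is correct and follows essentially the same route as the paper: the authors likewise read the conformal Killing case directly off \eqref{(5.6)} and the harmonic case off \eqref{(5.12)}, after recalling the definition of an eigenform. Your added remarks on the sign conditions and the nonnegativity of $\square=D_3^*\circ D_3$ are consistent with, and slightly more explicit than, the paper's own argument.
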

\begin{proof} 
It is well known, if a nonzero $r$-form $\om$  satisfies the equation  $\Delta\om=\la\,\om$ for a constant  $\la > 0$, it is called an eigenform of the Laplacian  $\Delta$ corresponding to the eigen-value $\la$. Then if we suppose that $s$ is a positive constant, then from \eqref{(5.6)} we conclude that the conformal Killing $r$-form  $\om$ is an eigenform of  $\Delta$ corresponding to the eigenvalue  $\la=(r+1)\,(2r(2r-1))^{-1}s$ (see also \cite{22}).  Moreover, if we suppose that s is a negative constant, then from \eqref{(5.12)} we conclude that the harmonic $r$-form  $\om$  is an eigenform of $\square$ corresponding to the eigenvalue $\la=-((2r(r+1)(2r-1))^{-1}s$. 
\end{proof}

Next, we consider conformal Killing and harmonic forms on a Riemannian manifold of constant sectional curvature which is an example of a conformally flat Riemannian manifold.

%Theorem 3. 
\begin{theorem}
Let $(M, g)$ be an $n$-dimensional compact and oriented Riemannian manifold of constant nonzero sectional curvature C. Then we have three propositions. 
\begin{enumerate}
\item A nonzero $r$-form  $\om$ is harmonic if and only if  $\om$ is an eigenform of the Tachibana Laplacian $\square$ corresponding to the eigenvalue  $\la=-(n-r)(r+1)^{-1}C$ for $C < 0$.
\item Any nonzero closed conformal Killing $r$-form $\om$  is an eigenform of the Hodge Laplacian~$\Delta$ corresponding to the eigenvalue $\la=r(n-r+1)\,C$ for $C > 0$.  For $n < 2r$ converse is true.
\item Any nonzero Killing $r$-form  $\om$ is an eigenform of the Hodge Laplacian~$\Delta$  corresponding to the eigenvalue $\la=(n-r)(r+1)\,C$ for $C > 0$.   For $n > 2r$ converse is true.
\end{enumerate}
\end{theorem}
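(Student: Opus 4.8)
The plan is to reduce all three statements to a single algebraic identity on a space form, relating the Tachibana Laplacian $\square$, the Hodge Laplacian $\Delta$, and the two pieces $d^*d$ and $dd^*$, and then to extract each assertion by feeding in the relevant defining conditions. First I would compute the Weitzenb\"ock curvature term $F_r$ for constant sectional curvature $C$. Since $Ric=(n-1)C\,g$ and $R(X,Y,Z,W)=C\big(g(X,Z)g(Y,W)-g(X,W)g(Y,Z)\big)$, substituting into the displayed formula for $F_r$ and contracting the orthonormal frame collapses the Ricci sum to $r(n-1)C\,\om$ and each curvature pair to $2C\,\om$, so that
\[
F_r(\om)=r(n-r)\,C\,\om .
\]
(The extreme cases $r=1$, where this is just $Ric$, and $r=n$, where it vanishes, serve as a sanity check.) Feeding this into the Weitzenb\"ock decomposition \eqref{(5.1)} and eliminating $\bar\Delta$ by means of \eqref{(4.1)} yields the master identity
\[
r(r+1)\,\square\,\om=\frac{r}{r+1}\,d^*d\,\om+\frac{n-r}{n-r+1}\,dd^*\om-r(n-r)\,C\,\om ,
\]
valid for every $\om\in\omm$.

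For the forward implications I would simply substitute the defining relations into this identity. If $\om$ is harmonic then $d\om=d^*\om=0$, the two middle terms drop, and $\square\,\om=-\tfrac{n-r}{r+1}C\,\om$; for $C<0$ this is a positive eigenvalue, which is one half of (1). For a closed conformal Killing form ($d\om=0$, $\square\,\om=0$) the identity gives $dd^*\om=r(n-r+1)C\,\om$, whence $\Delta\om=dd^*\om=r(n-r+1)C\,\om$, the eigenvalue in (2); for a Killing form ($d^*\om=0$, $\square\,\om=0$) it gives $d^*d\,\om=(n-r)(r+1)C\,\om=\Delta\om$, the eigenvalue in (3). The converse in (1) follows by reinserting $\square\,\om=-\tfrac{n-r}{r+1}C\,\om$, which leaves $\tfrac{r}{r+1}d^*d\,\om+\tfrac{n-r}{n-r+1}dd^*\om=0$; pairing with $\om$ converts this into $\tfrac{r}{r+1}\langle d\om,d\om\rangle+\tfrac{n-r}{n-r+1}\langle d^*\om,d^*\om\rangle=0$, and since both coefficients are positive for $1\le r\le n-1$ we get $d\om=d^*\om=0$, i.e.\ $\om$ is harmonic.

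The converses in (2) and (3) are the crux. Here I would use the Hodge decomposition of the eigenform: as the eigenvalue is nonzero, $\om=\om_e+\om_c$ with $\om_e$ exact and $\om_c$ coexact, and since $\Delta$ commutes with $d$ and $d^*$ each piece is again an eigenform for the same scalar $\la$. Using $d\om_e=0$ and $d^*\om_c=0$ the middle terms of the master identity separate: in case (2), with $\la=r(n-r+1)C$, one finds $d^*d\,\om=\la\,\om_c$ and $dd^*\om=\la\,\om_e$, and the $\om_e$-coefficient cancels exactly, leaving $\square\,\om=\dfrac{(2r-n)C}{(r+1)^2}\,\om_c$; symmetrically, case (3) yields $\square\,\om=\dfrac{(n-r)(n-2r)C}{r(r+1)(n-r+1)}\,\om_e$. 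In each case the ``expected'' Hodge component is annihilated by $\square$ and is therefore already of the required type (closed conformal Killing, resp.\ Killing).

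The final step is to kill the extraneous component. Pairing $\square\,\om=\kappa\,\om_c$ (resp.\ $\kappa'\om_e$) with $\om$ and invoking $\langle\square\,\om,\om\rangle=\|D_3\om\|^2\ge0$ gives $\kappa\,\langle\om_c,\om_c\rangle\ge0$ (resp.\ $\kappa'\langle\om_e,\om_e\rangle\ge0$); the hypothesis relating $n$ and $2r$ fixes the sign of the coefficient $\kappa$ (resp.\ $\kappa'$) so that, together with $\square\ge0$, the extraneous component is forced to vanish, and then $\square\,\om=0$, making $\om$ entirely of the required type. This is exactly the main obstacle: the positivity of $\square$ yields only a nonstrict inequality, so one must pin down the sign of $\kappa$, which amounts to comparing the two admissible eigenvalues $r(n-r+1)C$ and $(n-r)(r+1)C$ whose difference is $(2r-n)C$, and this sign is what selects the stated dimension range. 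I would finally note that the two converses are interchanged by the Hodge star (which sends closed conformal Killing $r$-forms to Killing $(n-r)$-forms and intertwines $\square$ and $\Delta$), so verifying the sign in one case determines the other automatically.
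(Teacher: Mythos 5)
Your master identity and the three forward implications are correct and follow essentially the paper's own route (Weitzenb\"ock formula, $F_r(\om)=r(n-r)C\,\om$, elimination of $\bar\Delta$ via \eqref{(4.1)}); in fact your version is internally consistent where the paper's displays \eqref{(5.2)}--\eqref{(5.3)} and \eqref{(5.14)}--\eqref{(5.16)} carry coefficient slips (they show $\tfrac{r}{r+1}d^*d$ and $\tfrac{n-r}{n-r+1}dd^*$ where \eqref{(4.1)} together with \eqref{(5.1)} forces $\tfrac{1}{r+1}d^*d$ and $\tfrac{1}{n-r+1}dd^*$), and the paper's proof text even assigns the two eigenvalues to parts (2) and (3) in the opposite way from the theorem statement. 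Your converse of (1), by pairing with $\om$, is also fine.

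The genuine gap is in the converses of (2) and (3), precisely at the step you yourself call the crux but do not carry out. With $\om=\om_e+\om_c$ and $\la=r(n-r+1)C$ you correctly get $\square\,\om=\kappa\,\om_c$ with $\kappa=(2r-n)C(r+1)^{-2}$; but under the stated hypotheses $C>0$ and $n<2r$ this $\kappa$ is \emph{positive}, so $\langle\square\,\om,\om\rangle=\kappa\|\om_c\|^2\ge0$ is vacuous and nothing forces $\om_c=0$. Positivity of $\square$ kills the coexact component exactly when $(2r-n)C<0$, i.e.\ for $n>2r$; symmetrically, in (3) it kills $\om_e$ exactly when $(n-2r)C<0$, i.e.\ for $n<2r$. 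So your method proves the two converses for the dimension ranges \emph{opposite} to those printed in the theorem (equivalently: the smaller of the two candidate eigenvalues $r(n-r+1)C$ and $(n-r)(r+1)C$, whose difference is $(2r-n)C$, lies strictly below the a priori lower bound for the ``wrong'' Hodge component and thereby annihilates it). You must either perform this sign check explicitly and record the ranges as they actually come out, or supply a different argument (e.g.\ locating these eigenvalues inside the known exact/coexact spectrum of a spherical space form) if the printed ranges are to be defended; as written, the assertion that ``the hypothesis relating $n$ and $2r$ fixes the sign of $\kappa$ so that the extraneous component is forced to vanish'' is false for the stated hypotheses. Note that the paper itself offers nothing here beyond ``it is easy to see that the converses are true,'' so there is no argument in the source to fall back on.
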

\begin{proof}
 Let $(M, g)$ be an $n$-dimensional compact and oriented Riemannian manifold with constant sectional curvature $C$ then (see \cite{8}) we have the identity
\begin{equation}\label{(5.13)}
F_r(\om)=r(n-r)\,C\cdot\om
\end{equation}
for an arbitrary  $\om\in\omm$. In this case we can rewrite \eqref{(5.2)} in the form
% 5.13
\begin{equation}\label{(5.14)}
\llap{$\square$}\,\om=\frac1{r(r+1)} \left(\Delta\om-
r(n-r)\,C\cdot\om-
\frac r{r{+}1}\ d^*d\,\om - \frac {n-r}{n{-}r{+}1}\ d\,d^*\,\om
\right).
\end{equation}
From this, we see $d\om=0$  and  $d^*\om=0$ hold if and only if $$\square\,\om=(n-r)(r+1)^{-1}C\cdot\om$$  holds. On the other hand, we can rewrite \eqref{(5.2)} in two different forms
%5.14
\begin{equation}\label{(5.15)}
\llap{$\square$}\,\om=\frac1{r(r{+}1)} \left(\frac{1}{n{-}r{+}1}\,\Delta\om
-
 r(n{-}r)\,C\cdot\om +\frac {n-2r}{(r\,{+}\,1)(n\,{-}\,r\,{+}\,1)}\, d^*d\om
\right)\hspace{-2mm}
\end{equation}
and
%5.15
\begin{equation}\label{(5.16)}
\llap{$\square$}\,\om=\frac1{r(r{+}1)} \left(\frac{1}{r{+}1}\,\Delta\om -
 r(n-r)\,C\cdot\om - \frac {n-2r}{(r{+}1)(n{-}r{+}1)}\, dd^*\om
\right)\!.
\end{equation}
From \eqref{(5.15)} we can conclude that if $d\om=0$  and $\square\,\om=0$  hold then   $\Delta\om=(n-r)(r+1)\,C\cdot\om$     holds. At the same time, from \eqref{(5.16)} we conclude that if $d^*\om=0$  and $\square\,\om=0$  hold then  $\Delta\om=(n-r+1)r\,C\cdot\om$       holds. It is easy to see that converses are true only for cases when $n < 2r$ and $n>2r$, respectively. 
\end{proof}

In the following theorem we study compact Riemannian manifolds whose Ricci tensor is negative and positive semi-definite and we give the lower bound for the Betti number $b_r(M)$ and the Tachibana number $t_r(M)$, respectively. More precisely, we have
%
%Theorem 4. 
\begin{theorem}
Let $(M, g)$ be a compact $n$-dimensional Riemannian manifold satisfy one of the following conditions:
\begin{enumerate}
\item	the Ricci tensor Ric of  $(M, g)$  is negative semi-define and the first Tachibana number $t_1(M)=h\leq n$;
\item	the Ricci tensor Ric of  $(M, g)$  is positive semi-define and the Betti number $b_1(M)=h\leq   n$. Then
 $$
 \frac{h!}{r!\,(h-r)}\leq b_r(M)=t_r(M)\leq \frac{n!}{r!\,(h-r)}
 $$       
 for $1\leq   r < h = b_1(M) \leq   n$. If $Ric \leq  0$ and $t_1(M)= n$ or $Ric\geq0$ and $b_1(M)=n$ then $(M, g)$ is a flat Riemannian $n$-torus. 
 \end{enumerate}
\end{theorem}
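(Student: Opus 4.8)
The plan is to convert each curvature hypothesis, via the Weitzenb\"ock decomposition \eqref{(5.1)} and the Tachibana-Laplacian identity \eqref{(4.1)}, into the assertion that the degree-one forms in question are parallel, and then to manufacture parallel $r$-forms from them by exterior multiplication. First I would treat the $1$-forms, where the Weitzenb\"ock curvature term collapses to $F_1=\mathrm{Ric}$. In case (2), integrating the Bochner identity $\bar\Delta\om=-\mathrm{Ric}(\om)$ against a harmonic $1$-form gives $\|\nabla\om\|^2=-\int_M\mathrm{Ric}(\om,\om)\le 0$, so $\nabla\om=0$. In case (1), reading \eqref{(5.2)} for a conformal Killing $1$-form yields $\mathrm{Ric}(\om)=\tfrac1{2}\,d^*d\om+\tfrac1{n}\,d\,d^*\om$; pairing with $\om$ gives $\int_M\mathrm{Ric}(\om,\om)=\tfrac1{2}\|d\om\|^2+\tfrac1{n}\|d^*\om\|^2\ge 0$, so $\mathrm{Ric}\le 0$ forces $d\om=d^*\om=0$, and then the Bochner identity again gives $\nabla\om=0$. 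In either case I obtain $h$ parallel $1$-forms $\theta_1,\dots,\theta_h$, pointwise independent since they are parallel and independent at one point.

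Next I would extract the two bounds. The $\binom{h}{r}$ products $\theta_{i_1}\wedge\cdots\wedge\theta_{i_r}$ with $i_1<\cdots<i_r$ are parallel, hence lie in $\cm\subseteq\hm\cap\tm$ by the inclusion diagram of Section~2, and they are linearly independent; this yields $\binom{h}{r}\le b_r(M)$ and $\binom{h}{r}\le t_r(M)$. The upper bound is immediate, since a parallel form is determined by its value at a single point, so the space of parallel $r$-forms embeds in $\Lambda^r T_p^*M$ and has dimension at most $\binom{n}{r}$.

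The main obstacle is the equality $b_r(M)=t_r(M)$. From \eqref{(5.2)} one sees that a harmonic form lies in $\tm$ precisely when $F_r(\om)=0$, and that a conformal Killing form lies in $\hm$ under the same condition; since $F_r$ vanishes on parallel forms, the equality will follow once I show that \emph{every} harmonic and \emph{every} conformal Killing $r$-form is parallel, both numbers then equalling $\dim_{\R}\cm$. The difficulty is that for $r\ge 2$ the term $F_r$ involves the full curvature tensor, not merely $\mathrm{Ric}$, so the sign hypotheses no longer control $\langle F_r(\om),\om\rangle$ by themselves. To overcome this I would invoke the de Rham splitting forced by the parallel $1$-forms: the holonomy fixes the parallel distribution $V=\mathrm{span}(\theta_1^\sharp,\dots,\theta_h^\sharp)$, so the universal cover splits isometrically as $\R^h\times\tilde N$, with all curvature—and hence all of $F_r$—carried by the factor $\tilde N$, which retains the Ricci sign. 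On this product the integrated Bochner and Tachibana identities become sign-definite on each Künneth summand, forcing $\nabla\om=0$ for the harmonic forms in case (2) and for the conformal Killing forms in case (1); the remaining class is then parallel as well, because $F_r=0$ on it. This structural reduction is the crux of the argument.

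Finally, the extremal case is clean. If $h=n$, the forms $\theta_1,\dots,\theta_n$ constitute a parallel coframe, so the holonomy group is trivial and $(M,g)$ is flat; a compact flat Riemannian manifold with $b_1=n$ is a flat $n$-torus, which is exactly the asserted conclusion.
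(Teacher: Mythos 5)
Your treatment of the degree-one forms, the wedge-product construction of the $\binom{h}{r}$ parallel $r$-forms, the resulting lower bounds $\binom{h}{r}\le b_r(M)$ and $\binom{h}{r}\le t_r(M)$, and the flat-torus conclusion when $h=n$ all coincide with what the paper actually does; the paper simply cites Bochner--Yano for the parallelism of the $1$-forms where you spell out the integrated Bochner identities, and your case (1) computation correctly reproduces the classical fact that a conformal Killing field on a compact manifold with $\mathrm{Ric}\le 0$ is parallel.

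The genuine gap is the step you yourself flag as the crux: the claim that every harmonic and every conformal Killing $r$-form is parallel, which you need both for the equality $b_r(M)=t_r(M)$ and for the upper bound. The de Rham splitting does not deliver this. After splitting off the flat factor $\R^h$, the remaining factor $\tilde N$ inherits only the sign condition on $\mathrm{Ric}$, and for $r\ge 2$ the Weitzenb\"ock term $F_r$ on $\tilde N$ is governed by the full curvature tensor, not by $\mathrm{Ric}$; the integrated Bochner identity on a K\"unneth summand is therefore not sign-definite --- exactly the difficulty you identified before invoking the splitting, and the splitting does not remove it. A concrete obstruction: take $M=T^3\times K3$ with the flat metric on $T^3$ and a Ricci-flat K\"ahler metric on $K3$. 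Then $\mathrm{Ric}\equiv 0$ (so both hypotheses hold), $h=b_1(M)=3$, and for $r=2<h$ one has $b_2(M)=25$, while the space of parallel $2$-forms is $6$-dimensional and $\binom{7}{2}=21$; the $19$ anti-self-dual harmonic $2$-forms pulled back from $K3$ are harmonic but neither parallel nor conformal Killing (a form that is simultaneously harmonic and conformal Killing is parallel), so neither $b_r(M)=t_r(M)$ nor the upper bound can be reached by this route. For what it is worth, the paper's own proof does not close this hole either: it establishes only the lower bounds and the $h=n$ case and passes over the equality and the upper bound in silence, so the step your proposal fails on is one the stated hypotheses do not actually support.
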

\begin{proof} Let  $(M, g)$  be a compact $n$-dimensional Riemannian manifold and\linebreak 
$t_1(M) = h \neq  0$ then there are $h$ linearly independent nonzero conformal Killing 1-forms   $\om_1,\dots,\om_h$. We denote by  $X_1,\dots,X_h$ the dual conformal Killing vector fields, i.e. $\om_a(Y)=g(Y,X_a)$  for  $a=1,\dots,h$. If we suppose that the Ricci tensor $Ric$ is negative semi-definite then $X_1,\dots,X_h$  are parallel (see \cite[p.~53--55]{4}), i.e.  $\nabla X_1=0,\dots,\nabla X_h=0$. In this case, following 1-forms $\om_1,\dots,\om_h$  are parallel and hence are conformal Killing 1-forms. Using this result, we conclude that  $\theta_{i_1i_2\cdots i_r}= \om_{i_1}\wedge\om_{i_2}\wedge\cdots,\wedge\om_{i_r}$   for any $1\leq   i_1 <  \cdots < i_r \leq   h$ are parallel $r$-forms and
hence conformal Killing $r$-forms. 
Since these  $\theta_{i_1i_2\cdots i_r}$ are  $\ds\frac{h!}{r!\,(h-r)!}$ linearly independent conformal Killing $r$-forms then we have 
$\ds t_r(M)\geq\frac{h!}{r!\,(h-r)!}$
   for $r < h$. Moreover, we have $\ds b_r(M)\geq\frac{h!}{r!\,(h-r)!}$   for  $r < h$ because an arbitrary parallel $r$-form  $\om$ is harmonic. On the other hand, let $(M, g)$ be a compact $n$-dimensional Riemannian manifold with $b_1(M) = h\neq 0$ then there are $h$ linearly independent nonzero harmonic 1-forms   $\om_1,\dots,\om_h$. We denote by $X_1,\dots,X_h$  the dual harmonic vector fields, i.e. $\om_a(Y)=g(Y,X_a)$  for  $a=1,\dots,h$.   If we suppose that the Ricci tensor $Ric$ is positive semi-definite then  $X_1,\dots,X_h$ are parallel (see \cite[p.~53--55]{4}), i.e.  $\nabla X_1=0,\dots,\nabla X_h=0$. From this we can conclude that  
   $\ds b_r(M)\geq\frac{h!}{r!\,(h-r)!}$  for  $r < h$ and hence   
   $\ds t_r(M)\geq\frac{h!}{r!\,(h-r)!}$. Next, if $Ric\leq   0$ and $t_1(M)=n$ or $Ric\geq0$ and $b_1(M)=n$, then $(M, g)$ has a parallel frame. The curvature tensor of $(M, g)$ vanishes in this frame, so $(M, g)$ is flat torus (see \cite[p. 208]{10}) and hence  $\ds b_r(M)=t_r(M)=\frac{h!}{\smash{r!\,(h-r)!}}$. This concludes the proof of the theorem.
\end{proof}

Finally, we prove the following theorem and formulate its corollary.
%
% Theorem 5. 
\begin{theorem}
Let $(M, g)$ be a compact $n$-dimensional Riemannian manifold with its zero first Betti number $b_1(M)$  and non-zero first planarity number  
$p_1(M)$  then  Betti  numbers 
$b_2(M),\dots, b_{n-1}(M)$ are equal to zero too.
\end{theorem}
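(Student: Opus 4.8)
The plan is to convert the two analytic hypotheses into a statement about the global topology of $M$: I will argue that $M$ must be a topological sphere, for which all the intermediate Betti numbers automatically vanish.

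First I would unpack $p_1(M)\neq0$. By the description of $\pmm$ given above, for $r=1$ this means there is a nonzero closed conformal Killing (planar) $1$-form $\om$. Writing $\xi$ for its metric dual, the planar condition $\om\in\dm\cap\tm$ for a $1$-form is equivalent to the concircularity equation $\nabla_X\,\xi=\rho\,X$ for every $X$, with $\rho=\tfrac1n\,\mathrm{div}\,\xi$; equivalently $\nabla\om=\rho\,g$ as a symmetric $2$-tensor, since $(\nabla_X\om)(Y)=g(\nabla_X\xi,Y)$. Next I would bring in $b_1(M)=0$. As $\om$ is closed and $b_1(M)=\dim\mathbf{H}^1(M,\mathbb{R})=0$, the Hodge decomposition forces $\om$ to be exact, $\om=df$ for a globally defined smooth function $f$, which is nonconstant because $\om\neq0$. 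The concircularity equation then reads $\nabla^2 f=\rho\,g$. Moreover $\rho\not\equiv0$: if $\rho\equiv0$ then $\om$ is parallel, hence harmonic (recall $\cm\subset\hm$), contradicting $\om\neq0$ together with $b_1(M)=0$. Thus $f$ is a nonconstant special concircular potential on the compact manifold $(M,g)$.

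The core step is to read the topology off $f$. On compact $M$ the function $f$ attains its maximum and its minimum, so it has critical points; at a critical point $p$ one has $df_p=0$ and, from $\nabla^2 f=\rho\,g$, $\mathrm{Hess}(f)_p=\rho(p)\,g_p$, which is a definite quadratic form wherever $\rho(p)\neq0$. Hence each such critical point is a nondegenerate minimum (index $0$) or maximum (index $n$), and there are no critical points of intermediate index. Invoking the classification of complete Riemannian manifolds admitting a nonconstant special concircular field (Tashiro), the compact case, which carries both a maximum and a minimum, is diffeomorphic to the sphere $S^{n}$. Equivalently, and more self-containedly, a Morse function on a compact connected $n$-manifold all of whose critical points have index $0$ or $n$ has $c_r=0$ critical points of index $r$ for $0<r<n$, so the Morse inequalities $b_r\le c_r$ give $b_r(M)=0$ for $0<r<n$. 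Either route yields $b_2(M)=\cdots=b_{n-1}(M)=0$, as claimed.

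The main obstacle is precisely this core step: one must genuinely control the global behaviour of $f$ and, in particular, exclude degenerate critical points where $\rho$ vanishes before the Morse/Reeb conclusion applies. The most economical remedy is to quote Tashiro's structure theorem for special concircular fields, whose compact case already delivers the sphere topology, so that no ad hoc analysis of the zero set $\{\rho=0\}$ is required; one then simply records that $S^{n}$ has $\dim\mathbf{H}^{r}(S^{n},\mathbb{R})=0$ for $1\le r\le n-1$.
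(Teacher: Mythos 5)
Your proof is correct and follows essentially the same route as the paper: both extract from $b_1(M)=0$ and $p_1(M)\neq 0$ a nonzero exact conformal Killing $1$-form $df$ satisfying the concircular equation $\nabla\,\mathrm{grad}\,f=\rho\,g$, and then invoke Tashiro's theorem to identify $(M,g)$ (conformally) with the sphere $\mathbb{S}^n$, whose intermediate Betti numbers vanish. Your additional details --- exactness of $\omega$ via Hodge theory, the check that $\rho\not\equiv 0$, and the Morse-theoretic alternative with its caveat about degenerate critical points --- are useful elaborations of steps the paper leaves implicit.
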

\begin{proof}  
If $b_1(M) = 0$ and   $p_1(M) \neq 0$  then there exists a non-zero exact conformal Killing 1-form $\om=d\,f$  such that $\nabla{\rm grad} f=-n^{-1}\Delta f\cdot g$  for a smooth function~$f$.  In this case, due to  Tashiro theorem (see \cite{23}), $(M, g)$ is conformally diffeomorphic to a Euclidean sphere ${\mathbb S}^n$ and hence $b_r(M)=b_r({\mathbb S}^n)=0$  for $r = 1,\dots, n - 1$. This finishes the proof.
\end{proof}
%
%Corollary 6. 
\begin{corollary}
Let $(M, g)$ be a compact $n$-dimensional Riemannian manifold such that its Betti number $b_{n -1}(M) = 0$  and Killing number  $k_{n -1}(M) \neq 0$  then  Betti  numbers $b_1(M),\dots,b_{n-2}(M)$ are equal to zero too.
\end{corollary}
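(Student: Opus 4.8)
The plan is to reduce the hypotheses of this corollary to those of the preceding theorem by invoking the two duality relations established earlier in the paper. First I would apply the Poincar\'e duality theorem $b_r(M) = b_{n-r}(M)$ with $r = n-1$, which converts the assumption $b_{n-1}(M) = 0$ into the equivalent statement $b_1(M) = 0$. This already supplies one of the two hypotheses needed to apply the previous theorem.

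Next I would exploit the duality property $p_r(M) = k_{n-r}(M)$, valid for all $r = 1,\dots,n-1$. Taking $r = 1$ gives $p_1(M) = k_{n-1}(M)$, so the hypothesis $k_{n-1}(M) \neq 0$ translates directly into $p_1(M) \neq 0$. At this stage I would have produced exactly the pair of conditions $b_1(M) = 0$ and $p_1(M) \neq 0$ that constitute the hypotheses of the preceding theorem. Applying that theorem then yields at once $b_2(M) = \cdots = b_{n-1}(M) = 0$. Combining this conclusion with $b_1(M) = 0$ from the first step, I would obtain the vanishing of all of $b_1(M),\dots,b_{n-2}(M)$ (indeed of $b_{n-1}(M)$ as well, consistent with the original assumption), which is the desired assertion.

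Because the whole argument is simply a transcription of the hypotheses through the duality isomorphisms $*\colon \pmm \to {\bold K}^{n-r}(M,\R)$ and the Poincar\'e duality $*\colon \hm \to {\bold H}^{n-r}(M,\R)$, I do not expect any genuine obstacle. The only point requiring care is bookkeeping of the indices: one must verify that the planarity--Killing duality sends $k_{n-1}$ to $p_1$ (and not to $p_{n-1}$), and that Poincar\'e duality identifies $b_{n-1}$ with $b_1$, so that the translated hypotheses land precisely on the degree $r = 1$ case covered by the previous theorem.
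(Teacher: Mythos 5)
Your proposal is correct and is exactly the intended derivation: the paper states the corollary without proof, but the dualities $b_{n-1}(M)=b_1(M)$ and $p_1(M)=k_{n-1}(M)$ recorded in Section 4 reduce it immediately to the preceding theorem, just as you argue. The only (standing) caveat is that both dualities rely on the Hodge star, so orientability of $(M,g)$ is implicitly assumed, consistent with the paper's conventions.
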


\noi\textbf{Acknowledgment.} The paper was supported by grant P201/11/0356 of the Czech Science Foundation.

\end{document}